\newtheorem{Theorem}{Theorem}[section]
\newtheorem{Lemma}[Theorem]{Lemma}
\newtheorem{Corollary}[Theorem]{Corollary}
\newtheorem{Example}[Theorem]{Example}
\newtheorem*{Remark}{Remark}
\newenvironment{Proof*}{{\it Proof.}}
\newcommand{\RR}{\mathbb{R}}
\newcommand{\ZZ}{\mathbb{Z}}
\newcommand{\Tr}{\mathrm{tr}}
\newcommand{\OO}{\mathcal{O}}
\begin{document}

\title{Products of nilpotents in a quaternion ring of odd order}

\author{David Dol\v{z}an}
\address{Department of Mathematics, Faculty of Mathematics and Physics, University of Ljubljana, Jadranska 19, SI-1000 Ljubljana, Slovenia; and Institute of Mathematics, Physics and Mechanics, Jadranska 19, SI-1000 Ljubljana, Slovenia}
\email{david.dolzan@fmf.uni-lj.si}

\subjclass[2020]{16P10, 16H05, 16N40, 11R52} 
\keywords{Quaternion rings, nilpotent products, local principal rings}
\thanks{The author acknowledges the financial support from the Slovenian Research Agency  (research core funding No. P1-0222)}

\begin{abstract} 
Let $R$ be a finite commutative local principal ring of cardinality $q^n$, where
$q = p^r$ for an odd prime $p$ and integer $r$ with $R/J(R) \simeq GF(q)$. We determine the number of elements in the quaternion ring $H(R)$ that can be expressed as a product of at least 
$2n-1$ nilpotent elements, and show by example that this bound is sharp.
\end{abstract}

\maketitle 

 \section{Introduction}

\bigskip

Let $R$ be a commutative ring. The set
\begin{equation*}
H(R)= \{r_1+ r_2i + r_3j + r_4k : r_i \in R\}=R \oplus Ri \oplus Rj \oplus Rk,
\end{equation*}
together with the relations $ i^2 = j^2 = k^2 = ijk = -1$, and $ij = -ji$ turns out to be a ring, called the $\it{quaternion}$ $\it{ring}$ over $R$. This is obviously a generalization of Hamilton's division ring of real quaternions, $H(\RR)$. Recently, many studies have been devoted to quaternion rings and their properties. Among others, the authors in \cite{aris1, aris2} studied the ring $H(\ZZ_p)$, for a prime number $p$, while in \cite{mig2,mig1} the structure of $H(\ZZ_n)$ was studied.
In \cite{ghara}, the authors investigated the structure of superderivations of quaternion rings over $\ZZ_2$-graded rings, while in \cite{ghara2} the authors studied the mappings on quaternion rings.
In \cite{cher22}, the structure of the ring $H(R)$ was described for a ring $R$, where $2$ is an invertible element, the authors in \cite{xie} studied systems of matrix equations over $H(R)$, and in \cite{cherdol}, the representations of elements of a quaternion ring as sums of exceptional units were investigated.

In any unital ring, the set of units, the set of nilpotents, and the set of idempotents are of utmost importance. By taking sums of elements in these sets, clean elements (sums
of units and idempotents), nil-clean elements (sums of nilpotents and idempotents)
and fine elements (sums of units and nilpotents) were defined (clean in \cite{nicholson}, nil-clean
in \cite{diesl} and fine in \cite{caluga0}) and accordingly clean (or nil-clean or fine) rings were defined and studied in depth.

Recently, in \cite{caluga}, the author has studied a problem of decomposing elements as a product of two idempotents in the ring of $2$-by-$2$ matrices over a general domain, and later in \cite{caluga1} the author also examined the question of which $2$-by-$2$ idempotent matrices are products of two nilpotent matrices. Furthermore, the characterization of singular $2$-by-$2$ matrices over a commutative domain which are products of two idempotents or products of two nilpotents was investigated in \cite{caluga2}.
In \cite{dolzanid} the author investigated the product of idempotents in quaternion rings.



In this paper, we investigate which elements of a quaternion ring over a finite commutative local principal ring can be written as a product of nilpotents.
The paper is structured as follows. In the next short section, we gather all the necessary definitions and basic lemmas that we shall use throughout the paper. Section 3 contains the main results, where we investigate the quaternion ring $H(R)$ over a finite principal commutative ring $R$ of odd order $q^n$, where $q = p^r$ for an odd prime $p$ and integer $r$ with $R/J(R) \simeq GF(q)$. We determine the number of elements that can be decomposed as a product of at least $2n-1$ nilpotents (see Theorem \ref{main}). We do this by considering the isomorphic structure of $2$-by-$2$ matrices over the same ring, and we characterize all the $2$-by-$2$ matrices that can be decomposed as a product of at least $2n-1$ nilpotents (see Theorem \ref{product} and Corollary \ref{charact}). We also provide an example (see Example \ref{counter}) that this result fails if fewer than $2n-1$ nilpotents are used in the decomposition.

\bigskip

 \section{Definitions and preliminaries}
\bigskip

All rings in our paper will be finite rings with identity. 
For a ring $R$, $Z(R)$ will denote its centre and $N(R)$ will denote the set of all its nilpotents. The group of units in $R$ will be denoted by $U(R)$ and the Jacobson radical of $R$ by $J(R)$. 

We will denote the $2$-by-$2$ matrix ring with entries in a subring $S$ of the ring $R$ by $M_2(S)$, while the group of invertible matrices therein will be denoted by $GL_2(S)$.


For any $a, b \in R$, let $M(a,b)=\left(\begin{array}{cc}
a & b\\
0 & 0
\end{array}\right) \in M_2(R)$. The trace of a matrix $A \in M_2(R)$ will be denoted by $\Tr(A)$. 

The group $GL_2(R)$ acts on $M_2(R)$ by conjugation. We will denote the orbit of an element $X \in M_2(R)$ for this action by $\OO_X$.

It follows from \cite[Theorem 2]{ragha} that every finite local ring has cardinality $p^{nr}$ for some prime number $p$ and some integers $n, r$. Furthermore, the Jacobson radical $J(R)$ is of cardinality $p^{(n-1)r}$ and the factor ring $R/J(R)$ is a field with $p^r$ elements (denoted $GF(p^r)$).

We also have the following lemma, which can be found in \cite{dolzanid}.

\begin{Lemma} \cite[Lemma 2.2]{dolzanid}
\label{pid}    
Let $R$ be a finite local principal ring of cardinality $q^{n}$, where $q=p^r$ for some prime number $p$ and integers $n, r$ with $R/J(R) \simeq GF(q)$. Then there exists $x \in J(R)$ such that $J(R)^k=(x^k)$ for every $k \in \{0,1,\ldots,n\}$. In particular, $|J(R)^k|=q^{n-k}$ for every $k \in \{0,1,\ldots,n\}$.
\end{Lemma}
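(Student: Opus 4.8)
The plan is to exploit two structural facts: that $R$ is a principal ideal ring (so every ideal, and in particular the maximal ideal, is generated by a single element) and that $R$ is local (so $J(R)$ is the unique maximal ideal and is nilpotent). First I would fix a generator of the maximal ideal: since $R$ is principal, we may write $J(R) = (x)$ for some $x \in J(R)$. Because $R$ is commutative, $J(R)^k = (x)^k = (x^k)$ for every $k$, with the convention $x^0 = 1$. Thus the equality $J(R)^k = (x^k)$ is immediate, and the whole content of the lemma lies in the cardinality count $|J(R)^k| = q^{n-k}$.

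For the cardinalities I would analyze the descending chain $R = J^0 \supseteq J \supseteq J^2 \supseteq \cdots$, where $J = J(R)$. The key observation is that each successive quotient $J^k/J^{k+1}$ is annihilated by $J$, hence is a module over the field $R/J \cong GF(q)$, i.e.\ a $GF(q)$-vector space. Moreover, since $J^k = (x^k) = R x^k$, the quotient $J^k/J^{k+1}$ is spanned over $R/J$ by the single image of $x^k$, so it is at most one-dimensional; consequently $|J^k/J^{k+1}| \in \{1, q\}$.

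Next I would show the chain is strictly decreasing until it reaches $0$, using Nakayama's lemma. Each $J^k$ is a finitely generated (indeed finite) $R$-module and $J \cdot J^k = J^{k+1}$, so if $J^{k+1} = J^k$ then Nakayama forces $J^k = 0$. Equivalently, as long as $J^k \neq 0$ we have $J^{k+1} \subsetneq J^k$, and then the nonzero vector space $J^k/J^{k+1}$ must have dimension exactly one, i.e.\ cardinality $q$. Writing $m$ for the nilpotency index of $J$ (so $J^{m-1} \neq 0 = J^m$), the product formula $|R| = \prod_{k=0}^{m-1} |J^k/J^{k+1}| = q^m$ together with $|R| = q^n$ gives $m = n$. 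Finally $|R/J^k| = \prod_{i=0}^{k-1} |J^i/J^{i+1}| = q^k$ for $0 \le k \le n$, whence $|J^k| = |R|/q^k = q^{n-k}$, which in particular re-confirms $J^n = 0$.

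The argument is essentially the structure theory of finite chain rings, and no step is genuinely hard; the only place requiring care is the combination of Nakayama's lemma with the principal generation of $J$ to pin each nonzero quotient $J^k/J^{k+1}$ to dimension exactly one over $GF(q)$. Everything else is bookkeeping with the multiplicativity of orders along the chain.
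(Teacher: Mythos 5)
Your argument is correct and complete: it is the standard structure-theoretic proof for finite chain rings, combining principality of $J(R)$ with Nakayama's lemma and the one-dimensionality of each quotient $J^k/J^{k+1}$ over $R/J(R)\simeq GF(q)$. The paper does not reprove this lemma but cites it from \cite[Lemma 2.2]{dolzanid}, and your reasoning matches the usual proof of that cited result, so there is nothing to flag.
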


The following lemma will be used frequently throughout the paper. The proof is straightforward.

\begin{Lemma}
\label{obvious}
   Let $R$ be a finite ring. Then $U(R)+J(R) \subseteq U(R)$. 
\end{Lemma}

\bigskip

 \section{Products of nilpotents}
\bigskip

In this section we turn to the product of nilpotents in a quaternion ring over a finite commutative local ring. 
The following theorem shows that if $R$ is a finite commutative local ring of odd order, then we can study $2$-by-$2$ matrices instead of $H(R)$.

\begin{Theorem}\cite[Theorem 3.10]{cher}
\label{izo}
If $R$ is a finite commutative local ring of odd cardinality $p^{nr}$ such that $R/J(R)$ is a field with $q=p^r$ elements, then $H(R) \simeq M_2(R)$.
\end{Theorem}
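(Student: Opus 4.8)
The plan is to exhibit an explicit isomorphism of $R$-algebras $\Phi\colon H(R)\to M_2(R)$ by splitting the Hamilton quaternions, that is, by realizing $i$ and $j$ as anticommuting matrices that square to $-1$. Since $|R|=p^{nr}$ is odd we have $p$ odd, so $2\in U(R)$; this is precisely what makes the classical ``sum of two squares'' splitting available. The heart of the argument is therefore to produce a single pair $x,y\in R$ with $x^2+y^2=-1$, after which the construction and verification are purely formal.

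First I would establish that $-1$ is a sum of two squares in $R$. Over the residue field $\GF{q}$ this is a standard counting fact: the subsets $\{\bar x^2 : \bar x\in \GF{q}\}$ and $\{-1-\bar y^2 : \bar y\in\GF{q}\}$ each have $(q+1)/2$ elements, and since $2\cdot\frac{q+1}{2}>q$ they must intersect, giving $\bar x^2+\bar y^2=-1$ with, say, $\bar x\neq 0$. I would then lift this solution to $R$: fixing a lift $b\in R$ of $\bar y$ and setting $g(X)=X^2+b^2+1\in R[X]$, we have $g(a_0)\in J(R)$ for any lift $a_0$ of $\bar x$, while $g'(a_0)=2a_0\in U(R)$ (here $2\in U(R)$, and $\bar x\neq 0$ forces $a_0\notin J(R)$, hence $a_0\in U(R)$ as $R$ is local). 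Because $J(R)$ is nilpotent, Hensel/Newton iteration terminates after finitely many steps and yields $a\in R$ with $a^2+b^2=-1$; write $x=a$, $y=b$.

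With such $x,y$ in hand, set
\[
A=\begin{pmatrix} x & y\\ y & -x\end{pmatrix},\qquad
B=\begin{pmatrix} 0 & 1\\ -1 & 0\end{pmatrix},\qquad
C=AB=\begin{pmatrix} -y & x\\ x & y\end{pmatrix}.
\]
A direct computation, using that $R$ is commutative and $x^2+y^2=-1$, gives $A^2=B^2=-I$ and $AB=-BA$ (where $I$ is the identity matrix), whence $C^2=ABC=-I$ as well; thus $A,B,C$ satisfy exactly the defining relations of $i,j,k$. Since $R$ embeds centrally as scalar matrices, the assignment $r_1+r_2i+r_3j+r_4k\mapsto r_1I+r_2A+r_3B+r_4C$ is a well-defined $R$-algebra homomorphism $\Phi$. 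To see that $\Phi$ is bijective, note that both $H(R)$ and $M_2(R)$ are free $R$-modules of rank $4$, so it suffices to check that $\Phi$ sends the basis $1,i,j,k$ to an $R$-basis of $M_2(R)$, i.e.\ that the $4\times 4$ matrix expressing $I,A,B,C$ in the standard basis of matrix units is invertible over $R$. A short expansion of this determinant, again invoking $x^2+y^2=-1$, collapses it to $4$, which lies in $U(R)$; hence $\Phi$ is an $R$-module isomorphism and, being a ring homomorphism, a ring isomorphism.

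The only genuinely delicate point is the existence of $x,y\in R$ with $x^2+y^2=-1$: the counting step over $\GF{q}$ is routine, but one must justify the lift to $R$, which is exactly where the local (Henselian) structure enters through nilpotence of $J(R)$ together with $2\in U(R)$. Everything downstream is a finite computation, and the clean outcome $\det = 4$ makes bijectivity immediate, with no orbit-counting or case analysis required.
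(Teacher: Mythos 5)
Your argument is correct, but note that the paper does not prove this statement at all: Theorem \ref{izo} is imported verbatim from the cited reference (Theorem 3.10 of \cite{cher}), so there is no in-paper proof to compare against. What you have written is a complete, self-contained proof along the standard ``splitting'' lines: the counting argument giving $\bar x^2+\bar y^2=-1$ in $GF(q)$ is the classical fact that a ternary quadratic form over a finite field is isotropic; the Hensel/Newton lift works because $J(R)$ is nilpotent, $2\in U(R)$, and $a_0\in U(R)$ (so $g'(a_k)=2a_k\in U(R)$ persists, by the paper's Lemma \ref{obvious}); the matrices $A$, $B$, $C=AB$ do satisfy $A^2=B^2=C^2=ABC=-I$ and $AB=-BA$; and the change-of-basis determinant from $\{I,A,B,C\}$ to the matrix units does evaluate to $4\in U(R)$, so $\Phi$ is an $R$-module (hence ring) isomorphism. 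The one point worth making explicit is why ``at least one of $\bar x,\bar y$ is nonzero'' can be arranged as $\bar x\neq 0$: since $-1\neq 0$ in $GF(q)$ they cannot both vanish, and the roles of $x$ and $y$ are symmetric, so this is harmless. In short: the proof is sound and supplies a derivation the paper delegates to the literature; it buys the reader independence from \cite{cher} at the cost of about a page of elementary verification.
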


Next, we find the number of nilpotent elements in $H(R)$.

\begin{Lemma}
\label{noofnil}
Let $R$ be a finite commutative local ring of
odd cardinality $p^{nr}$ such that $R/J(R)$ is a field with $q=p^r$ elements.
Then $|N(H(R))|=q^{2(2n-1)}$.
\end{Lemma}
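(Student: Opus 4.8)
The plan is to exploit the isomorphism $H(R) \simeq M_2(R)$ furnished by Theorem \ref{izo}, which reduces the problem to counting nilpotent matrices in $M_2(R)$. The key structural fact I would establish first is that a matrix $A \in M_2(R)$ is nilpotent if and only if its reduction $\bar A \in M_2(GF(q))$ modulo $J(R)$ is nilpotent. One direction is immediate, since $A^m = 0$ forces $\bar A^m = 0$. For the converse I would use that the kernel of the reduction map $M_2(R) \to M_2(GF(q))$ is $M_2(J(R))$, and that this is a \emph{nilpotent} ideal: indeed $M_2(J(R))^k \subseteq M_2(J(R)^k)$, and $J(R)^n = 0$ by Lemma \ref{pid}. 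Hence if $\bar A^m = 0$, then $A^m \in M_2(J(R))$ and so $A^{mn} \in M_2(J(R))^n = 0$. (Equivalently, via Cayley--Hamilton and the nilpotency of $J(R)$, one checks that $A$ is nilpotent precisely when $\Tr(A) \in J(R)$ and $\det(A) \in J(R)$; I would use whichever formulation is cleaner.)

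The upshot is that $N(M_2(R))$ is exactly the full preimage of $N(M_2(GF(q)))$ under the reduction map. Since this map is a surjective ring homomorphism whose fibres are the cosets of $M_2(J(R))$, each fibre has cardinality $|J(R)|^4 = (q^{n-1})^4 = q^{4(n-1)}$, where $|J(R)| = q^{n-1}$ is taken from Lemma \ref{pid}. Therefore $|N(M_2(R))|$ equals $q^{4(n-1)}$ times the number of nilpotent matrices over the residue field $GF(q)$.

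It then remains to count the nilpotent $2$-by-$2$ matrices over $GF(q)$. Over a field such a matrix is nilpotent iff its trace and determinant vanish, so writing the diagonal entries as $a$ and $-a$ and the off-diagonal entries as $b, c$, the condition becomes $bc = -a^2$. A short case split on whether $a = 0$ (yielding $2q-1$ pairs $(b,c)$) or $a \neq 0$ (yielding $q-1$ pairs for each of the $q-1$ nonzero values of $a$) gives $(2q-1) + (q-1)^2 = q^2$ nilpotent matrices over $GF(q)$. Combining the two counts yields $|N(H(R))| = |N(M_2(R))| = q^2 \cdot q^{4(n-1)} = q^{4n-2} = q^{2(2n-1)}$, as claimed.

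The computation is largely routine; the only steps requiring care are the equivalence between nilpotency of $A$ and of its reduction $\bar A$ — where the nilpotency of the ideal $M_2(J(R))$, and hence Lemma \ref{pid}, is essential — and the bookkeeping in the case split over $GF(q)$. I expect the main obstacle to be justifying the reduction equivalence cleanly rather than carrying out the final arithmetic.
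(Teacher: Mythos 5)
Your proof is correct and follows essentially the same route as the paper's: pass to $M_2(R)$ via Theorem \ref{izo}, observe that $N(M_2(R))$ is the full preimage of $N(M_2(GF(q)))$ under reduction modulo the nilpotent ideal $M_2(J(R))$, and multiply the fibre size $q^{4(n-1)}$ by the count over the residue field — the only difference being that you prove directly (the lifting of nilpotency and the count of $q^2$ nilpotent matrices over $GF(q)$) what the paper delegates to the citation \cite[Theorem 3.2]{cher}. One small attribution point: Lemma \ref{pid} assumes $R$ is principal, whereas this lemma does not, so for $J(R)$ nilpotent and $|J(R)|=q^{n-1}$ you should instead invoke the standard facts about finite local rings recalled in Section 2 (via \cite{ragha}); this does not affect the argument.
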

\begin{proof}
 By Theorem \ref{izo}, we have $H(R) \simeq M_2(R)$, so $|N(H(R))|=|N(M_2(R))|$.
 Since $|R/J(R)|=q$, we have $|J(R)|=q^{n-1}$, so $|J(M_2(R))|=|M_2(J(R))|=q^{4(n-1)}$.
Also, $M_2(R)/J(M_2(R)) \simeq M_2(GF(q))$, so \cite[Theorem 3.2]{cher} now gives us
$|N(M_2(R))|=q^{4(n-1)}q^2=q^{2(2n-1)}$.
\end{proof}

So, let us study the nilpotents in a $2$-by-$2$ matrix ring.

\begin{Lemma}
\label{matnil}
Let $R$ be a finite commutative local ring.
Then $A \in M_2(R)$ is nilpotent if and only if one of the following statements hold.
\begin{enumerate}
    \item 
    $A \in M_2(J(R)),$
    \item 
    $A \in \left(\begin{array}{cc}
0 & u\\
0 & 0
\end{array}\right) + M_2(J(R))$ for some $u \in U(R)$,
    \item 
    $A \in \left(\begin{array}{cc}
0 & 0\\
u & 0
\end{array}\right) + M_2(J(R))$ for some $u \in U(R)$,
\item
    $A \in \left(\begin{array}{cc}
u & -v\\
v^{-1}u^2 & -u
\end{array}\right) + M_2(J(R))$ for some $u, v \in U(R)$.
\end{enumerate}
\end{Lemma}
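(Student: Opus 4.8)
The plan is to reduce the question to a trace--determinant criterion over the residue field and then sort matrices by which of their entries are units. Since $R$ is finite local, $J(R)$ is nilpotent, and hence $M_2(J(R))=J(M_2(R))$ is a nil ideal of $M_2(R)$; consequently $A$ is nilpotent if and only if its image $\bar A$ in $M_2(R)/M_2(J(R))\simeq M_2(GF(q))$ is nilpotent. Because trace and determinant commute with the reduction map, and a $2$-by-$2$ matrix over a field is nilpotent exactly when its trace and determinant both vanish, I obtain the clean criterion: $A$ is nilpotent if and only if $\Tr(A)\in J(R)$ and $\det(A)\in J(R)$. This is the workhorse of the whole argument, so I would establish it first.

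For the forward direction I would write $A=\begin{pmatrix} a & b\\ c & d\end{pmatrix}$ and exploit that in a local ring every element is either a unit or lies in $J(R)$, with $J(R)$ prime. The condition $\Tr(A)\in J(R)$ forces $d\equiv -a \pmod{J(R)}$, so $a$ and $d$ are simultaneously units or simultaneously in $J(R)$. If $a\in J(R)$, then $ad\in J(R)$, so $\det(A)\in J(R)$ reduces to $bc\in J(R)$; primeness of $J(R)$ then splits this into the three possibilities $b,c\in J(R)$ (case (1)), $b\in U(R),\ c\in J(R)$ (case (2)), and $b\in J(R),\ c\in U(R)$ (case (3)), while $b,c\in U(R)$ is excluded. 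If instead $a\in U(R)$, then $d\in U(R)$ and $ad\in U(R)$, so $\det(A)\in J(R)$ forces $bc\equiv ad\equiv -a^2 \pmod{J(R)}$, whence both $b$ and $c$ are units. Setting $u=a$ and $v=-b$, I would check entrywise that $A$ differs from $\begin{pmatrix} u & -v\\ v^{-1}u^2 & -u\end{pmatrix}$ only by a matrix in $M_2(J(R))$: the $(1,1)$ and $(1,2)$ entries agree on the nose, the $(2,2)$ entry differs by $\Tr(A)\in J(R)$, and the $(2,1)$ congruence $c\equiv v^{-1}u^2\pmod{J(R)}$ is exactly the relation $bc\equiv -a^2$ after multiplying by $b^{-1}$. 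This lands $A$ in case (4).

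The converse is a direct verification: in each of the four listed forms I would compute that $\Tr(A)\in J(R)$ and $\det(A)\in J(R)$ and invoke the criterion above. Cases (1)--(3) are immediate, since every term of the trace and determinant then carries a factor from $J(R)$; in case (4) the trace is $(u+j)+(-u+j')\in J(R)$ and the determinant reduces modulo $J(R)$ to $u(-u)-(-v)(v^{-1}u^2)=-u^2+u^2=0$.

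I expect the only real subtlety to be case (4): one must both isolate the all-units situation as the single remaining family and pin down the precise parametrization $\begin{pmatrix} u & -v\\ v^{-1}u^2 & -u\end{pmatrix}$, taking care that the off-diagonal relation is read modulo $J(R)$ rather than as an identity. Everything else is bookkeeping driven by the local structure of $R$ and the primeness of its maximal ideal.
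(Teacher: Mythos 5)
Your proof is correct, and it takes a genuinely different (though closely related) route from the paper's. Both arguments begin the same way, reducing modulo the nil ideal $M_2(J(R))$ so that nilpotence of $A$ is equivalent to nilpotence of $\overline{A}$ over the residue field $GF(q)$. From there the paper classifies the nonzero nilpotent matrices $\overline{A}$ via the rank-one outer-product factorization $\overline{A}=xy^T$ with $x^Ty=0$, and reads off the three nonzero normal forms from the vanishing pattern of the coordinates of $x$; you instead invoke the trace--determinant criterion ($\overline{A}$ nilpotent iff $\mathrm{tr}=\det=0$, i.e.\ $\Tr(A),\det(A)\in J(R)$) and then sort by which entries of $A$ are units, using that $J(R)$ is the unique maximal ideal. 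Your entrywise bookkeeping is sound: in the all-units case the congruences $d\equiv -a$ and $bc\equiv ad\equiv -a^2 \pmod{J(R)}$ do yield exactly the parametrization $u=a$, $v=-b$ with $c\equiv v^{-1}u^2$, matching form (4). The trade-off is minor: the trace--determinant criterion gives a cleaner logical gate and makes the exclusion of impossible cases automatic, while the paper's factorization argument produces the parametrization $u=x_1y_1$, $v=-x_1y_2$ directly without having to guess it; for the converse direction the paper's observation that $A^2\in M_2(J(R))$ is marginally quicker than recomputing trace and determinant, but both are one-line checks.
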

\begin{proof}
Choose $A$ in any of the four sets above.
Since $J(R)$ is nilpotent, we know that $M_2(J(R))$ is nil, and $A^2 \in M_2(J(R))$, so we conclude that $A$ is nilpotent.
On the other hand, suppose that $A \in M_2(R)$ is nilpotent. Denote $F=R/J(R)$ and let  $\overline{A}=A+J(M_2(R)) \in M_2(R)/J(M_2(R)) \simeq M_2(F)$ for any $A \in M_2(R)$.
 Then $\overline{A} \in M_2(F)$ is a nilpotent matrix of rank at most $1$, and thus $\overline{A}=xy^T$ for some $x,y \in F^2$ with $x^Ty=0$. Denote $(x_1,x_2)=x^T$ and $(y_1,y_2)=y^T$. Suppose $\overline{A}\neq 0$. If $x_1=0$, then $y_2=0$ and $\overline{A} = \left(\begin{array}{cc}
0 & 0\\
x_2y_1 & 0
\end{array}\right)$. If $x_2=0$, then $y_1=0$ and $\overline{A} = \left(\begin{array}{cc}
0 & x_1y_2\\
0 & 0
\end{array}\right)$. However, if $x_1, x_2 \neq 0$, then $y_1, y_2 \neq 0$. Denote $u=x_1y_1$ and $v=-x_1y_2$. Then the fact that $x_1y_1+x_2y_2=0$ yields $x_2y_2=-u$ and $x_2y_1=-x_1^{-1}x_2^2y_2=v^{-1}u^2$. 
\end{proof}

The next technical lemma will be crucial in our investigation.

\begin{Lemma}
\label{technical}
    Let $R$ be a finite commutative local principal ring of cardinality $q^{n}$, where $q=p^r$ for some prime number $p$ and integers $n, r$ with $R/J(R) \simeq GF(q)$. Then the following two statements hold.
\begin{enumerate}
    \item 
        Suppose $A=\left(\begin{array}{cc}
t & j_1\\
j_2 & 0
\end{array}\right) \in M_2(R)$ for some $t \in J(R)^k\setminus J(R)^{k+1}$, $j_1 \in J(R)^l\setminus J(R)^{l+1}$ with $0 \leq k < l \leq n$ and $j_2 \in J(R)^{n-1}$. Then there exist $a, b \in R$ such that $A \in \OO_{M(a,b)}$.
  \item 
          If $A=\left(\begin{array}{cc}
0 & j_1 \\
0 & j_2 
\end{array}\right) \in M_2(R)$ for some $j_1, j_2 \in J(R)^{n-1}$ then there exist $a, b \in R$ such that $A \in \OO_{M(a,b)}$.
\end{enumerate}
\end{Lemma}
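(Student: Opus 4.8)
The plan is to reduce both parts to the same elementary criterion and then verify that criterion by a valuation count in the principal ring $R$. Write $M(a,b)=e_1(a,b)$, where $e_1=(1,0)^{T}$, so that $M(a,b)$ is a rank-one matrix whose column space is spanned by $e_1$. Conjugating gives $PM(a,b)P^{-1}=(Pe_1)\bigl((a,b)P^{-1}\bigr)$, and $Pe_1$ is just the first column of $P$; conversely, over the local ring $R$ any vector $u\in R^2$ with a unit entry extends to an invertible matrix $P=(u\mid u')$. This yields the criterion I will use: \emph{if the two columns of $A$ are both $R$-multiples of one common vector $u$ having a unit entry, then $A\in\OO_{M(a,b)}$ for suitable $a,b$}. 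Indeed, writing the columns as $z_1u$ and $z_2u$, i.e. $A=u(z_1,z_2)$, and completing $u$ to an invertible $P=(u\mid u')$, one computes $P^{-1}AP=e_1\bigl((z_1,z_2)P\bigr)=M\bigl((z_1,z_2)u,(z_1,z_2)u'\bigr)$, using $P^{-1}u=e_1$. So it suffices in each part to exhibit the common vector $u$.

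For part (1), I first invoke Lemma \ref{pid} to fix a generator $x$ of $J(R)$ with $J(R)^m=(x^m)$ for all $m$ and $x^n=0$, noting that any element of $J(R)^m\setminus J(R)^{m+1}$ equals $x^m$ times a unit (its cofactor cannot lie in $J(R)$), while any element of $J(R)^{m}$ equals $x^{m}$ times some element of $R$. Thus I write $t=x^{k}\epsilon$ with $\epsilon\in U(R)$ and $j_2=x^{n-1}s$ with $s\in R$. Since $k\le n-1$ (forced by $k<l\le n$), the first column factors as $(t,j_2)^{T}=x^{k}\,u$ with $u=(\epsilon,\,x^{n-1-k}s)^{T}$, and $u$ has the unit entry $\epsilon$, hence is of the required type. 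The point is that the second column $(j_1,0)^{T}$ is also a multiple of this same $u$: setting $z_2=j_1\epsilon^{-1}$, one has $z_2u=(j_1,\,j_1\epsilon^{-1}x^{n-1-k}s)^{T}$, and the lower entry vanishes because $j_1\in J(R)^{l}=(x^{l})$ and $l+(n-1-k)\ge n$ (as $l>k$), so $j_1x^{n-1-k}\in(x^{n})=0$. Hence both columns lie in $Ru$, and the criterion applies.

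For part (2), the first column is $0$ and the second is $(j_1,j_2)^{T}=x^{n-1}(s_1,s_2)^{T}$, where $j_i=x^{n-1}s_i$. If $A=0$ the claim is trivial; otherwise some $j_i\ne 0$, which forces the corresponding $s_i$ to be a unit (since $x^{n-1}s_i\ne0$ requires $s_i\notin J(R)$), so $u=(s_1,s_2)^{T}$ has a unit entry and both columns, namely $0=0\cdot u$ and $x^{n-1}u$, lie in $Ru$. The criterion then again produces the desired $a,b$.

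The main obstacle — really the only place the hypotheses are genuinely used — is the vanishing of the off-direction entry in part (1): the two columns a priori point in different directions, and it is precisely the conditions $k<l$ and $j_2\in J(R)^{n-1}$, together with $x^{n}=0$, that force them onto a common unimodular line. Everything else is routine valuation bookkeeping in the principal local ring.
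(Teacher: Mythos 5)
Your proof is correct. The arithmetic heart is the same as the paper's --- the lower entry of the second column dies because $j_1x^{n-1-k}\in J(R)^{l+n-1-k}\subseteq J(R)^{n}=0$ when $l>k$, which is precisely the paper's verification that $\gamma j_1=x^{n-1-k}j_1=0$ --- but your packaging is genuinely different. The paper fixes candidate values of $a,b$ and solves the four scalar equations of $M(a,b)P=PA$ for an explicitly guessed conjugator, doing so separately in the two parts and splitting off the edge cases $j_2=0$ (part (1)) and $j_1=0$ or $j_2=0$ (part (2)). You instead prove once and for all that any outer product $A=u(z_1,z_2)$ with $u$ unimodular (i.e.\ having a unit entry) lies in $\OO_{M((z_1,z_2)u,\,(z_1,z_2)u')}$ for a completion $P=(u\mid u')$, and then exhibit such a factorization of $A$ by pulling the valuation $x^{k}$ (resp.\ $x^{n-1}$) out of the relevant column; the only nontrivial check is the vanishing noted above. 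This buys a uniform treatment of both parts, absorbs the paper's edge cases ($j_2=0$, $A=0$, etc.) without extra branching, and makes transparent why the hypotheses $k<l$ and $j_2\in J(R)^{n-1}$ are exactly what force the two columns onto a common unimodular line; the paper's version, by contrast, is a more direct line-by-line verification that hands the reader the conjugating matrix ready-made.
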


\begin{Remark}
    By Lemma \ref{pid}, we know that $J(R)^n=0$. By a slight abuse of notation, we shall abide by the convention that $J(R)^n\setminus J(R)^{n+1}$ equals $\{0\}$, since it simplifies many of our statements. We shall use this same convention throughout the remainder of this paper.
\end{Remark}

\begin{proof}
Notice firstly that by Lemma \ref{pid}, there exists $x \in R$ such that we have $J(R)=(x)$.
\begin{enumerate}
    \item 
    Let $A=\left(\begin{array}{cc}
t & j_1\\
j_2 & 0
\end{array}\right) \in M_2(R)$. We have $A \in \OO_{M(a,b)}$ for some $a, b \in R$ if and only if there exists $P=\left(\begin{array}{cc}
\alpha & \beta \\
\gamma & \delta
\end{array}\right) \in GL_2(R)$ such that $M(a,b)P=PA$. This means that we get the following four equations:
\begin{align*}
  \alpha t + \beta j_2 = \alpha a + \gamma b, \, \alpha j_1=\beta a + \delta b, \\  \gamma t + \delta j_2 = 0, \, \gamma j_1 = 0.  
\end{align*}
We can assume that $j_2=ux^{n-1}$ for some $u \in U(R)$, since we have $A = {M(t,j_1)}$ if $j_2=0$. By our assumptions, we also have $j_1=vx^l$ and $t=wx^k$ for some $v,w \in U(R)$. 
Choose $\alpha=1$, $\beta=0$, $\gamma=x^{n-1-k}$ and $\delta=-u^{-1}w$ and observe that $P$ is indeed an invertible matrix. Then choose $b=-uw^{-1}j_1$ and $a=t$. Observe that $\delta b = j_1$ and since $k < l$, we also have $\gamma b = \gamma j_1 = 0$. Thus, one can easily check that all the four above equations do indeed hold.
  \item 
  Let $A=\left(\begin{array}{cc}
0 & j_1\\
0 & j_2
\end{array}\right)$. If $j_2=0$, then $A=M(j_1,0)$, and if $j_1=0$ then $A \in \OO_{M(j_2,0)}$.
So, we can assume that $j_1=u_1x^{n-1}$ and $j_2=u_2x^{n-1}$ for some $u_1,u_2 \in U(R)$. But then one can check that
$\left(\begin{array}{cc}
1 & 0\\
-u_2 & u_1
\end{array}\right)A\left(\begin{array}{cc}

1 & 0\\
-u_2 & u_1
\end{array}\right)^{-1}=M(j_2,x^{n-1})$.
\end{enumerate}
\end{proof}

The next lemma will investigate conjugating matrices with some special invertible matrices.

\begin{Lemma}
\label{conjug}
   Let $R$ be a commutative ring. For any $t \in R$ and any $\alpha \in U(R)$, denote $T_t=\left(\begin{array}{cc}
1 & t\\
0 & 1
\end{array}\right) \in GL_2(R)$ and $V_\alpha=\left(\begin{array}{cc}
1 & 0\\
0 & \alpha
\end{array}\right) \in GL_2(R)$. Then the following statements hold.
\begin{enumerate}
    \item 
    For any $a, b \in R$ and any $t \in U(R)$, we have $T_t^{-1}M(a,b)T_t=M(a,b+at)$.

    \item 
    For any $u, v \in U(R)$, we have $T_{vu^{-1}}^{-1}\left(\begin{array}{cc}
u & -v\\
v^{-1}u^2 & -u
\end{array}\right)T_{vu^{-1}}=\left(\begin{array}{cc}
0 & 0\\
v^{-1}u^2 & 0
\end{array}\right)$.
\end{enumerate}
\end{Lemma}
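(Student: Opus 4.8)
The plan is to verify both identities by direct matrix multiplication; there is no structural argument to be made here, only careful bookkeeping together with the commutativity of $R$. First I would record the inverses, which are immediate: since $T_tT_{-t}=I$, we have $T_t^{-1}=T_{-t}=\left(\begin{array}{cc} 1 & -t\\ 0 & 1\end{array}\right)$ for every $t\in R$. (In fact the invertibility of $t$ plays no role in part (1) — the conjugation makes sense for any $t\in R$; only in part (2) do I genuinely need $u,v\in U(R)$ in order to form $vu^{-1}$.)

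For part (1) I would exploit the fact that $M(a,b)$ has zero bottom row. Right-multiplication by $T_t$ leaves the bottom row zero and sends the top row $(a,b)$ to $(a,\,at+b)$, so $M(a,b)T_t=M(a,b+at)$. Subsequent left-multiplication by $T_t^{-1}=T_{-t}$ subtracts $t$ times the (zero) bottom row from the top row and therefore changes nothing, giving $T_t^{-1}M(a,b)T_t=M(a,b+at)$ as claimed.

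For part (2) I would first compute $XT_s$, where $X=\left(\begin{array}{cc} u & -v\\ v^{-1}u^2 & -u\end{array}\right)$ and $s=vu^{-1}$. The whole point of the choice $s=vu^{-1}$ is that it annihilates the right-hand column: the $(1,2)$ entry becomes $u\cdot vu^{-1}-v=0$ and the $(2,2)$ entry becomes $v^{-1}u^2\cdot vu^{-1}-u=u-u=0$, where commutativity is used to rearrange $v^{-1}u^2vu^{-1}=u$. This leaves $XT_s=\left(\begin{array}{cc} u & 0\\ v^{-1}u^2 & 0\end{array}\right)$, and left-multiplying by $T_s^{-1}=T_{-s}$ then clears the $(1,1)$ entry via $u-vu^{-1}\cdot v^{-1}u^2=u-u=0$ while leaving $v^{-1}u^2$ untouched in the $(2,1)$ position, producing exactly the asserted matrix. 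The only point requiring any attention is the repeated cancellation $v^{-1}u^2\cdot vu^{-1}=u$, which is precisely the identity that the conjugating matrix $T_{vu^{-1}}$ is engineered to trigger and which would fail without commutativity of $R$; beyond this the proof is a two-line calculation in each case, so I do not anticipate any real obstacle.
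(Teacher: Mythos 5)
Your proposal is correct and matches the paper's approach exactly: the paper simply states that both identities follow by direct calculation, and you have carried out that calculation accurately, including the key cancellation $v^{-1}u^2\cdot vu^{-1}=u$ in part (2). Your side observation that part (1) holds for arbitrary $t\in R$ is also correct and is in fact how the lemma is later applied in the proof of Theorem \ref{product}.
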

\begin{proof}
    Both statements can be checked with simple direct calculations.
\end{proof}

The next two technical lemmas will be useful.

\begin{Lemma}
\label{mab}    
   Let $R$ be a commutative ring and let $a,b \in R$. If $A \in \OO_{M(a,b)}$ and $B \in M_2(R)$ then there exist $c,d \in R$ such that $AB \in \OO_{M(c,d)}$.
\end{Lemma}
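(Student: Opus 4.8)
The plan is to use the elementary but decisive observation that the set of matrices of the form $M(\cdot,\cdot)$ — that is, the matrices in $M_2(R)$ whose second row vanishes — is stable under multiplication on the right by arbitrary matrices. Since $A \in \OO_{M(a,b)}$, I would first fix a matrix $P \in GL_2(R)$ with $A = P\,M(a,b)\,P^{-1}$, so that the whole problem is transported into a frame in which $A$ has been put into the standard shape $M(a,b)$.

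Next, the key step is to conjugate the product $AB$ by the \emph{same} matrix $P$. Setting $C = P^{-1}BP \in M_2(R)$, we obtain
\[
P^{-1}(AB)P = \bigl(P^{-1}AP\bigr)\bigl(P^{-1}BP\bigr) = M(a,b)\,C .
\]
Now $M(a,b)$ has zero second row, and left-multiplying any matrix by a matrix with zero second row yields a matrix with zero second row; concretely, if $C=\left(\begin{smallmatrix} p & q \\ s & t\end{smallmatrix}\right)$, then $M(a,b)\,C = M(ap+bs,\,aq+bt)$. Hence $P^{-1}(AB)P = M(c,d)$ with $c = ap+bs$ and $d = aq+bt$, and therefore $AB = P\,M(c,d)\,P^{-1} \in \OO_{M(c,d)}$, as required.

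There is essentially no obstacle here: the force of the lemma is the happy fact that conjugating the product by the very matrix $P$ that carries $A$ to $M(a,b)$ turns the \emph{left} factor of the product into $M(a,b)$, and the matrices of the form $M(\cdot,\cdot)$ form a right ideal of $M_2(R)$, hence are unchanged in type under right multiplication by $C$. The only thing worth verifying is the single matrix identity $M(a,b)\,C = M(ap+bs,\,aq+bt)$, which is a one-line computation valid over any commutative ring; in particular the hypothesis that $R$ is local, principal, or of odd order is not needed for this statement.
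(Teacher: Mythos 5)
Your proof is correct and follows essentially the same route as the paper: conjugate $AB$ by the matrix $P$ carrying $A$ to $M(a,b)$, set $C=P^{-1}BP$, and use the identity $M(a,b)C=M(ap+bs,\,aq+bt)$. The only difference is the (immaterial) convention of writing $A=PM(a,b)P^{-1}$ rather than $P^{-1}M(a,b)P$; your remark that no hypotheses beyond commutativity are needed is also consistent with the paper's statement.
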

\begin{proof}
    Let $A=P^{-1}M(a,b)P$ for some invertible matrix $P \in M_2(R)$. Observe that $AB=P^{-1}M(a,b)PBP^{-1}P$ and denote $C=PBP^{-1}=\left(\begin{array}{cc}
c_1 & c_2\\
c_3 & c_4
\end{array}\right)$. Then $AB=P^{-1}M(a,b)CP=P^{-1}M(ac_1+bc_3,ac_2+bc_4)P$, so $AB \in \OO_{M(ac_1+bc_3,ac_2+bc_4)}$.
\end{proof}

\begin{Lemma}
\label{deter}
    Let $R$ be a finite commutative local principal ring of cardinality $q^{n}$, where $q=p^r$ for some prime number $p$ and integer $r$ with $R/J(R) \simeq GF(q)$. Choose $a \in J(R)^l \setminus J(R)^{l+1}$ and $b \in J(R)^k \setminus J(R)^{k+1}$ for some $n-2 \geq k > l$. If $M(a,b)+T=N_1N_2\ldots N_{2n-3}$ where $N_i \in M_2(R)$ is a nilpotent matrix for every $1 \leq i \leq 2n-3$ and $T=\left(\begin{array}{cc}
t_1 & t_2\\
t_3 & t_4
\end{array}\right) \in M_2(J(R)^{n-1})$, then $t_4=0$.
\end{Lemma}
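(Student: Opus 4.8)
The invariant to exploit is the determinant, as the label of the lemma suggests; the plan is to compute $\det(M(a,b)+T)$ and $\det(N_1N_2\cdots N_{2n-3})$ separately and then compare. For the left-hand side I would expand along the entries of $M(a,b)+T=\left(\begin{array}{cc} a+t_1 & b+t_2\\ t_3 & t_4\end{array}\right)$ and discard every term that the radical filtration kills:
\[
\det(M(a,b)+T)=(a+t_1)t_4-(b+t_2)t_3=at_4+t_1t_4-bt_3-t_2t_3 .
\]
Since $t_1,t_2,t_3,t_4\in J(R)^{n-1}$, both $t_1t_4$ and $t_2t_3$ lie in $J(R)^{2n-2}$, which is $0$ because the hypotheses $k\le n-2$ and $k>l\ge 0$ force $k\ge 1$ and hence $n\ge 3$. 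Likewise $bt_3\in J(R)^{k}J(R)^{n-1}=J(R)^{k+n-1}=0$, again using $k\ge 1$. Thus the whole expression collapses to $\det(M(a,b)+T)=at_4$.

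For the right-hand side I would use that the determinant is multiplicative and that the determinant of a nilpotent matrix is nilpotent: for each $i$ one has $(\det N_i)^m=\det(N_i^m)=0$ for large $m$, so $\det N_i\in N(R)=J(R)$, since in a finite commutative local ring the nilradical coincides with $J(R)$. Hence
\[
\det(N_1N_2\cdots N_{2n-3})=\prod_{i=1}^{2n-3}\det N_i\in J(R)^{2n-3}\subseteq J(R)^{n}=0,
\]
the last inclusion once more using $2n-3\ge n$, i.e. $n\ge 3$. Comparing the two computations yields the scalar identity $at_4=0$.

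From here the case $l=0$ is immediate: then $a\in R\setminus J(R)=U(R)$ is a unit, so $at_4=0$ forces $t_4=0$, which is exactly the conclusion. I expect the delicate part to be the case $l\ge 1$, where the determinant becomes blind to $t_4$. Indeed, when $l\ge 1$ every entry of $M(a,b)+T$ lies in $J(R)^l$ (the top-left has valuation exactly $l$, the top-right valuation $k>l$, and the bottom row lies in $J(R)^{n-1}\subseteq J(R)^l$ because $l\le n-3$), so $at_4\in J(R)^{l+n-1}=0$ holds automatically and the identity above gives no information about $t_4$.

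To handle $l\ge 1$ I would isolate the socle component of the $(2,2)$-entry rather than the full determinant. By Lemma \ref{pid} the socle $J(R)^{n-1}$ has cardinality $q$, hence is a one-dimensional $F$-vector space for $F=R/J(R)$, and $t_4$ is determined by its image there. The plan is to track the $J(R)^{n-1}$-component of the lower-right entry through the product $N_1N_2\cdots N_{2n-3}$, using the explicit normal forms of Lemma \ref{matnil} for each factor; this is a multilinear refinement of the determinant computation, and the hypothesis that the first column dominates (the valuation of $a$ being strictly below that of $b$ and of the bottom row) should force this component to cancel. A plausible alternative is an induction in which one peels off a single nilpotent factor and invokes Lemma \ref{mab} to lower either the exponent $l$ or the length $n$. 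In either route, this socle-level bookkeeping for $l\ge 1$, where the determinant alone cannot see $t_4$, is the step I anticipate to be the main obstacle.
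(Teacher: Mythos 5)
Your determinant computation is the right invariant, and your case $l=0$ is correct and matches the easy part of the argument. But you have correctly diagnosed, and then not repaired, the fatal problem with working in $R$ itself: for $l\ge 1$ the identity $at_4=0$ holds vacuously because $at_4\in J(R)^{l+n-1}=0$, so the determinant in $R$ carries no information about $t_4$. Your two suggested remedies for this case (tracking the socle component of the $(2,2)$-entry through the normal forms of Lemma \ref{matnil}, or an induction peeling off one factor) are left entirely as plans; neither is carried out, and the first in particular is not obviously workable, since the $(2,2)$-entry of a product of $2n-3$ matrices is a large multilinear expression with no evident reason to vanish term by term. Since the lemma is invoked in the proof of Theorem \ref{product} with $l$ equal to the (arbitrary) valuation of $a$, the case $l\ge 1$ is genuinely needed, and your proposal does not establish it. This is a real gap, not a presentational one.

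The idea you are missing is to make the determinant see $t_4$ by lengthening the radical filtration before computing. The paper takes a finite local principal ring $S$ of cardinality $q^{2n}$ with $S/J(S)\simeq GF(q)$ and $S/J(S)^n\simeq R$, lifts the whole equation $N_1\cdots N_{2n-3}=M(a,b)+T$ to $M_2(S)$, and compares valuations there. Each lift $\hat{N_i}$ reduces mod $J(S)$ to a nilpotent matrix over $GF(q)$, hence $\det\hat{N_i}\in J(S)$ and $\det(\hat{N_1}\cdots\hat{N}_{2n-3})\in J(S)^{2n-3}\subseteq J(S)^{n+l}$ (using $l\le n-3$, which follows from $l<k\le n-2$). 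On the other side, if $t_4\ne 0$ then $\hat{a}\hat{t_4}$ has valuation exactly $l+n-1$ in $S$, while the remaining terms $\hat{t_1}\hat{t_4}$, $\hat{t_2}\hat{t_3}$, $\hat{b}\hat{t_3}$ all lie in $J(S)^{l+n}$; so the determinant of the lifted left-hand side is not in $J(S)^{n+l}$, a contradiction. In short: your invariant is the right one, but it must be evaluated in a lift of $R$ where $J^{2n}=0$ rather than $J^n=0$, and that lifting step is the substance of the proof.
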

\begin{proof}
  Let $S$ be a finite local principal ring of cardinality $q^{2n}$ with $S/J(S) \simeq GF(q)$ such that $S/J(S)^n \simeq R$. Now lift the equation $N_1N_2\ldots N_{2n-3}= M(a,b)+T$ to $S$: $\hat{N_1}\hat{N_2}\ldots \hat{N}_{2n-3}= M(\hat{a},\hat{b})+\hat{T}$.
  Since $\hat{N_i}$ is not invertible, we have $\det\hat{N_i} \in J(S)$ for every $1 \leq i \leq 2n-3$, so $\det(\hat{N_1}\hat{N_2}\ldots \hat{N}_{2n-3}) \in J(S)^{2n-3}$. Denote $\hat{T}=\left(\begin{array}{cc}
\hat{t_1} & \hat{t_2}\\
\hat{t_3} & \hat{t_4}
\end{array}\right)$. On the other hand, $\det({M(\hat{a},\hat{b})}+\hat{T})=(\hat{a}+\hat{t_1})\hat{t_4} - (\hat{b}+\hat{t_2})\hat{t_3}$. If $t_4\neq 0$, then $\hat{t_4} \in J(S)^{n-1} \setminus J(S)^{n}$ and therefore
$\hat{a}\hat{t_4} \in J(S)^{l+n-1} \setminus J(S)^{l+n}$. Note however that $\hat{t_1}\hat{t_4}, \hat{t_2}\hat{t_3} \in J(S)^{2n-2} \subseteq J(S)^{l+n}$ and $\hat{b}\hat{t_3} \in J(S)^{n-1+k} \subseteq J(S)^{l+n}$. Therefore, $(\hat{a}+\hat{t_1})\hat{t_4} - (\hat{b}+\hat{t_2})\hat{t_3} \notin J(S)^{l+n}$, a contradiction.
\end{proof}

The next theorem is extremely important for establishing our main result, as it shows that any product of a large enough number of nilpotent matrices lies in the orbit of some matrix $M(a,b)$.

\begin{Theorem}
\label{product}
    Let $R$ be a finite commutative local principal ring of cardinality $q^{n}$, where $q=p^r$ for some prime number $p$ and integers $n, r$ with $R/J(R) \simeq GF(q)$. Choose $s \geq 2n-1$ and suppose that $A \in M_2(R)$ is a product of $s$ nilpotent matrices. 
    Then there exist $a, b \in R$ such that $A \in \OO_{M(a,b)}$.
\end{Theorem}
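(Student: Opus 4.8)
The plan is to reduce to the case $s = 2n-1$ and then induct on $n$, stripping off one layer of the radical by reduction modulo $J(R)^{n-1}$. If $s > 2n-1$, I would write $A = (N_1 \cdots N_{2n-1})(N_{2n}\cdots N_s)$; once the first factor is known to lie in some $\OO_{M(a',b')}$, Lemma \ref{mab} immediately places $A$ in some $\OO_{M(c,d)}$. So it suffices to treat $s = 2n-1$ exactly. For the base case $n=1$ the ring $R = GF(q)$ is a field, every nilpotent in $M_2(R)$ has rank at most one and is thus conjugate to some $M(a,b)$ (Lemma \ref{matnil}), and Lemma \ref{mab} propagates this through the product.

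For the inductive step I would pass to $\bar R = R/J(R)^{n-1}$. By Lemma \ref{pid} this is again a finite commutative local principal ring satisfying the hypotheses, now with parameter $n-1$: its radical is $J(R)/J(R)^{n-1}$, whose $(n-1)$-st power vanishes, $\bar R/J(\bar R)\simeq GF(q)$, and $|\bar R| = q^{n-1}$. The image $\bar A$ is a product of $2n-1 \ge 2(n-1)-1$ nilpotent matrices over $\bar R$, so by the inductive hypothesis $\bar A \in \OO_{M(\bar a,\bar b)}$. Any lift to $M_2(R)$ of the conjugating matrix is automatically invertible (its determinant is a unit modulo $J(R)$), so I may replace $A$ by a conjugate $B = \left(\begin{smallmatrix} a & b \\ c & d\end{smallmatrix}\right)$ whose bottom row lies in $J(R)^{n-1}$; crucially $B$ is still a product of $2n-1$ nilpotents, since conjugation preserves nilpotency, and $\det B = \prod_i \det N_i \in J(R)^{2n-1}=0$.

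It then remains to show such a $B$ lies in some $\OO_{M(a',b')}$, and I would split on $\Tr(B) = a+d$. If $\Tr(B)$ is a unit, the characteristic polynomial $\lambda\bigl(\lambda - \Tr(B)\bigr)$ has one unit root and one root in $J(R)$, so Hensel/idempotent lifting splits $R^2$ into two free rank-one eigen-submodules and diagonalizes $B$ to $M(\Tr(B),0)$. If $\Tr(B) \in J(R)$ then $a \in J(R)$; when $b$ is a unit the relation $\det B = ad-bc=0$ (together with $ad \in J(R)\cdot J(R)^{n-1}=0$) forces $c=0$, and the explicit conjugation of Lemma \ref{conjug} by $\left(\begin{smallmatrix}1 & 0 \\ e & 1\end{smallmatrix}\right)$ with $e=db^{-1}$ clears the bottom-right corner and brings $B$ to $M(a+d,b)$. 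In the remaining situations I would normalize the radical depths of the top-row entries using the conjugations $T_t, V_\alpha$ of Lemma \ref{conjug} and then quote Lemma \ref{technical}.

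The main obstacle is the final case, where every entry of $B$ lies in $J(R)$. Here $\det B = 0$ alone is insufficient — an arbitrary matrix of this shape (for example a nonzero scalar matrix with entries in $J(R)^{n-1}$) need not lie in any $\OO_{M(a',b')}$ — so one must genuinely use that $B$ is a product of $2n-1$ nilpotents. The device is the valuation argument of Lemma \ref{deter}: lifting the factorization to a larger ring $S$ with $S/J(S)^n \simeq R$ and comparing the $J(S)$-adic valuation of $\det B$ coming from the product (divisible by a high power of $J(S)$) against the valuation of the explicit determinant of $M(a,b)+T$ forces the bottom-right entry to vanish. Once it does, $B$ has the form $\left(\begin{smallmatrix} t & j_1 \\ j_2 & 0\end{smallmatrix}\right)$ with $j_2 \in J(R)^{n-1}$, and a final depth comparison between $t$ and $j_1$ lets Lemma \ref{technical}(1) (or (2), when $t$ also vanishes) conclude that $B \in \OO_{M(a',b')}$, hence so is $A$. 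The delicate points I expect to fight are the bookkeeping of radical depths needed to match the hypotheses of Lemmas \ref{technical} and \ref{deter}, and reconciling the $2n-1$ factors available here with the $2n-3$ appearing in Lemma \ref{deter}.
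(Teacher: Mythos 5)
Your overall architecture (reduce to $s=2n-1$ via Lemma \ref{mab}, induct on $n$, pass to $R/J(R)^{n-1}$) matches the paper's, but your inductive step takes a genuinely different route. The paper applies the inductive hypothesis only to the first $2n-3$ factors, keeping two honest nilpotent factors $N_{2n-2},N_{2n-1}$ in hand, and then runs a long case analysis over the four types of nilpotents from Lemma \ref{matnil} for these two factors, computing $(M(a,b)+T')N'N''$ explicitly in each case and invoking Lemmas \ref{technical}, \ref{conjug} and \ref{deter}. You instead apply the inductive hypothesis to the whole product, reduce to a single matrix $B=M(a,b)+T$ with $T\in M_2(J(R)^{n-1})$ known to be a product of $2n-1$ nilpotents, and try to classify such $B$ directly. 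Your route is conceptually cleaner: it isolates exactly which invariant of the factorization survives (the $J(S)$-adic valuation of the determinant of a lift), and it correctly identifies why something beyond the congruence class of $M(a,b)$ is needed (Example \ref{counter} lives precisely in such a class). The unit-trace case via the idempotent $\Tr(B)^{-1}B$ and the unit-$b$ case via a lower-triangular conjugation are fine and are arguments the paper does not use. The price is that you must extract more from the valuation argument than Lemma \ref{deter} literally states, whereas the paper only ever needs it in the single narrow situation of its case (3).

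The concrete soft spot is the final case $a,b\in J(R)$. Your claim that the valuation argument ``forces the bottom-right entry to vanish'' is only correct after you have normalized so that the top-left entry is strictly shallower in the radical filtration than the top-right one (otherwise $\hat b\hat c$ is not dominated by $\hat a\hat d$); when the top-right entry is the shallower one, the same argument forces the bottom-\emph{left} entry to vanish instead, and you finish with a lower-triangular conjugation rather than Lemma \ref{technical}(1). More seriously, when all four entries of $B$ lie in $J(R)^{n-1}$ the conclusion $t_4=0$ is simply false: both $\left(\begin{smallmatrix}0&x^{n-1}\\0&x^{n-1}\end{smallmatrix}\right)$ and $\left(\begin{smallmatrix}x^{n-1}&x^{n-1}\\x^{n-1}&x^{n-1}\end{smallmatrix}\right)$ are products of $2n-1$ nilpotents (by the easy direction of Corollary \ref{charact}, which does not depend on this theorem) with nonzero bottom-right entry, and the second is covered by neither part of Lemma \ref{technical}. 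What the valuation argument actually yields there is that $B=x^{n-1}C$ with $\overline{C}$ singular over $R/J(R)$, hence of rank at most one over the residue field, hence conjugate over the field to a matrix with zero bottom row; since $x^{n-1}J(R)=0$, lifting that conjugating matrix performs the same reduction over $R$ and lands $B$ in some $\OO_{M(a',b')}$. With that supplementary rank-one argument and the depth normalization spelled out, your plan goes through.
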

\begin{proof}
Observe firstly that $J(R)=(x)$ and $J(R)^n=0$ by Lemma \ref{pid}.

We will prove this statement by induction on $n$. Let us start with the case $n=1$.
If $A \in M_2(GF(q))$ is a nilpotent matrix, we have the following four possibilities by Lemma \ref{matnil}:
\begin{enumerate}
    \item 
    If $A=0$, then $A \in \OO_{M(0,0)}$.
    \item
        If $A = \left(\begin{array}{cc}
0 & u\\
0 & 0
\end{array}\right)$ for some $u \in U(GF(q))$, then $A \in \OO_{M(0,u)}$.
    \item 
    If $A = \left(\begin{array}{cc}
0 & 0\\
u & 0
\end{array}\right)$ for some $u \in U(R)$, then $\left(\begin{array}{cc}
0 & 1\\
1 & 0
\end{array}\right)^{-1}A\left(\begin{array}{cc}
0 & 1\\
1 & 0
\end{array}\right)=M(0,u)$, so $A \in \OO_{M(0,u)}$.
\item
    Finally, if $A = \left(\begin{array}{cc}
u & -v\\
v^{-1}u^2 & -u
\end{array}\right)$ for some $u, v \in U(GF(q))$, then by Lemma \ref{conjug}(2) and by the previous case, $A \in \OO_{M(0,v^{-1}u^2)}$.
\end{enumerate}
So, if $A_1A_2\ldots A_s$ is a product of nilpotent matrices for some $s \geq 1$, we have $A_1 \in \OO_{M(a,b)}$ for some $a, b \in R$, therefore $A_1A_2\ldots A_s \in \OO_{M(c,d)}$ for some $c, d \in R$ by Lemma \ref{mab}. Thus, we have proved the statement holds for $n=1$.

Now, assume that $n \geq 2$ and that the statement holds for all integers $1,2,\ldots,n-1$. Let $A=N_1N_2\ldots N_s$ for some $s \geq 2n-1$ and for some nilpotent matrices $N_1,N_2,\ldots,N_s \in M_2(R)$. 
Denote $R_1=R/J(R)^{n-1}$ and observe that $R_1$ is a finite local principal ring of cardinality $q^{n-1}$  with $R_1/J(R_1) \simeq GF(q)$.
Denote also  $\overline{A}=A+J(M_2(R))^{n-1} \in M_2(R)/J(M_2(R))^{n-1}$ for any $A \in M_2(R)$. 

By induction, we have $\overline{N_1} \cdot \overline{N_2} \cdot \ldots  \cdot \overline{N_{2n-3}} \in \OO_{M(\overline{a},\overline{b})}$ for some $\overline{a},\overline{b} \in R_1$.  Therefore, there exist $a,b \in R$ such that $N_1N_2\ldots N_{2n-3} + T \in \OO_{M(a,b)}$ for some matrix $T \in M_2(J(R)^{n-1})$. It follows that there exists an invertible matrix $P \in M_2(R)$ such that  $P^{-1}(N_1N_2\ldots N_{2n-3} + T)P=(P^{-1}N_1P)(P^{-1}N_2P)\ldots (P^{-1}N_{2n-3}P) + P^{-1}TP=M(a,b)$. Suppose that $a \in J(R)^l \setminus J(R)^{l+1}$ and that $b \in J(R)^k \setminus J(R)^{k+1}$ for some integers $0 \leq k, l \leq n-2$. By Lemma \ref{conjug}(1), we can assume that $k \leq l$, since we can replace $b$ by $b+ta$ for any $t \in R$.
It immediately follows that
\begin{multline}
 \label{produkt}   
(P^{-1}N_1P)(P^{-1}N_2P)\ldots (P^{-1}N_{2n-3}P)(P^{-1}N_{2n-2}P)(P^{-1}N_{2n-1}P)=\\(M(a,b)+T')(P^{-1}N_{2n-2}P)(P^{-1}N_{2n-1}P)
\end{multline}
for $T'=-P^{-1}TP \in M_2(J(R)^{n-1})$.
Denote $N'=(P^{-1}N_{2n-2}P)$ and $N''=(P^{-1}N_{2n-1}P)$. By Lemma \ref{matnil}, we have four possibilities for the nilpotent matrix $N'$:
\begin{enumerate}
    \item 
    $N' \in M_2(J(R))$. Then $(M(a,b)+T')N'=M(a,b)N'=M(c,d)$ for some $c,d \in R$.

\item 
$N'=\left(\begin{array}{cc}
0 & 0\\
u & 0
\end{array}\right) + X'$, for some $u \in U(R)$ and $X'=\left(\begin{array}{cc}
x_1 & x_2\\
x_3 & x_4
\end{array}\right) \in M_2(J(R))$. Denote $T'=\left(\begin{array}{cc}
t_1 & t_2\\
t_3 & t_4
\end{array}\right)$. Then 
$$(M(a,b)+T')N'=\left(\begin{array}{cc}
ax_1+b(u+x_3)+t_2u & ax_2+bx_4\\
t_4u & 0
\end{array}\right).$$
We have $t_4u \in J(R)^{n-1}$. The fact that $u \in U(R)$ coupled with the fact that $k \leq l$ gives us $ax_1+b(u+x_3)+t_2u \in J(R)^k \setminus J(R)^{k+1}$, while 
$ax_2+bx_4 \in J(R)^{k+1}$.
By Lemma \ref{technical}(1), we get that $(P^{-1}N_1P)(P^{-1}N_2P)\ldots (P^{-1}N_{2n-3}P)N' \in \OO_{M(c,d)}$ for some $c,d \in R$, and therefore $(P^{-1}N_1P)(P^{-1}N_2P)\ldots (P^{-1}N_{k}P) \in \OO_{M(c',d')}$ for some $c',d' \in R$ by Lemma \ref{mab}. Thus, $N_1N_2\ldots N_k \in \OO_{M(c',d')}$ as well.

\item
$N'=
\left(\begin{array}{cc}
u & -v\\
v^{-1}u^2 & -u
\end{array}\right) 
 + X'$, for some $u, v \in U(R)$ and some $X' \in M_2(J(R))$. Let $a=u_ax^l$ and $b=u_bx^k$ for some $u_a,u_b \in U(R)$. We consider the following cases.
 \begin{itemize}
  \item
  $k < l$ or $uu_b+vu_a \notin J(R)$: Lemma \ref{conjug}(2) yields the existence of an invertible matrix $T_{vu^{-1}} \in M_2(R)$ such that $T_{vu^{-1}}^{-1}\left(\begin{array}{cc}
u & -v\\
v^{-1}u^2 & -u
\end{array}\right)T_{vu^{-1}}=\left(\begin{array}{cc}
0 & 0\\
v^{-1}u^2 & 0
\end{array}\right)$.
But Lemma \ref{conjug}(1) now shows that $T_{vu^{-1}}^{-1}M(a,b)T_{vu^{-1}}=M(a,b+avu^{-1})$. Since $k < l$ or $uu_b+vu_a \notin J(R)$, we have $u(b+avu^{-1})=uu_bx^k+vu_ax^l \in J(R)^k \setminus J(R)^{k+1}$, so we have $b+avu^{-1} \in J(R)^k \setminus J(R)^{k+1}$ as well, therefore 
\begin{multline*}
   \,\,\,\,\,\,\,\,\,\,\,\,\,\,  ((PT_{vu^{-1}})^{-1}N_1(PT_{vu^{-1}}))((PT_{vu^{-1}})^{-1}N_2(PT_{vu^{-1}}))\ldots ((PT_{vu^{-1}})^{-1}N_{2n-2}(PT_{vu^{-1}}))= \\ \left(M(a,b+avu^{-1})+T''\right)\left(\left(\begin{array}{cc}
0 & 0\\
v^{-1}u^2 & 0
\end{array}\right)+X''\right)
\end{multline*}
for some $T'' \in M_n(J(R)^{n-1})$, and some $X'' \in M_n(J(R))$.
But we have already proved in case (2) of this proof, that in this case there exist some $c',d' \in R$ such that $N_1N_2\ldots N_k \in \OO_{M(c',d')}$.

     \item
     $k=l$ and $uu_b+vu_a \in J(R)$: Then we consider the following cases.
     \begin{itemize}
         \item 
         $N'' \in M_2(J(R))$ or $N''=\left(\begin{array}{cc}
0 & 0\\
u & 0
\end{array}\right) + X''$, for some $u \in U(R)$ and some $X'' \in M_2(J(R))$. But then the statement follows by cases (1) and (2) of this proof.

\item
$N''=\left(\begin{array}{cc}
0 & w\\
0 & 0
\end{array}\right) + X''$, for some $X''\in M_2(J(R))$.  Then by conjugating equation (\ref{produkt}) with $T_{vu^{-1}}$, we get by Lemma \ref{conjug} that the product of our $2n-1$ nilpotents equals $(M(a,b+avu^{-1})+T'')N_1'N_1''$  for some $T''=\left(\begin{array}{cc}
t_1 & t_2\\
t_3 & t_4
\end{array}\right) \in M_2(J(R)^{n-1})$, $N_1'=\left(\begin{array}{cc}
0 & 0\\
v^{-1}u^2 & 0
\end{array}\right)+X_1'$ and $N_1''=\left(\begin{array}{cc}
0 & w\\
0 & 0
\end{array}\right) + X_1''$, for some $X_1',X_1''\in M_2(J(R))$.
The fact that $uu_b+vu_a \in J(R)$ implies that $b'=b+avu^{-1} \in J(R)^{k'} \setminus J(R)^{k'+1}$ for some $k' > l$. If $k' \leq n-2$, then Lemma \ref{deter} implies that $t_4=0$. Denote 
$X_1'=\left(\begin{array}{cc}
x_1 & x_2\\
x_3 & x_4
\end{array}\right)$ and $X_1''=\left(\begin{array}{cc}
y_1 & y_2\\
y_3 & y_4
\end{array}\right)$. Then, some direct calculations show that $(M(a,b')+T'')N_1'N_1''=M((ax_1+b'v^{-1}u^2+b'x_3)y_1+(ax_2+b'x_4)y_3, (ax_1+b'v^{-1}u^2+b'x_3+t_2v^{-1}u^2)w+(ax_1+b'v^{-1}u^2+b'x_3)y_2+(ax_2+b'x_4)y_4)$.

Finally, if $k > n-2$, then $b' \in J(R)^{n-1}$, so we can assume that $b'=0$. Again, some direct calculations yield $(M(a,0)+T'')N_1'N_1''=\left(\begin{array}{cc}
0 & ax_1w + t_2v^{-1}u^2w\\
0 & t_4v^{-1}u^2w
\end{array}\right)$. But now the statement holds by Lemma \ref{technical}(2).

\item 
$N''=\left(\begin{array}{cc}
u' & -v'\\
v'^{-1}u'^2 & -u'
\end{array}\right) + X''$, for some $u',v' \in U(R)$ and some $X''\in M_2(J(R))$. If $u'u_b+v'u_a \notin J(R)$, then the statement follows from the first part of case (3) of this proof. So, we can assume that $u'u_b+v'u_a \in J(R)$. Denote $uu_b+vu_a=j_1$ and $u'u_b+v'u_a=j_2$ for some $j_1, j_2 \in J(R)$. Then $v=-u_a^{-1}uu_b(1+j_1')$ and
$v'=-u_a^{-1}u'u_b(1+j_2')$, where $j_1'=-u^{-1}u_b^{-1}j_1, j_2'=-u'^{-1}u_b^{-1}j_2 \in J(R)$. But then we have
$vu'-uv', uu'^{-1}-vv'^{-1} \in J(R)$, so one can easily check that
$\left(\begin{array}{cc}
u & -v\\
v^{-1}u^2 & -u
\end{array}\right)\left(\begin{array}{cc}
u' & -v'\\
v'^{-1}u'^2 & -u'
\end{array}\right)
 \in M_2(J(R))$. However, this implies that $N'N'' \in M_2(J(R))$ as well.  Now, the statement follows by case (1) of this proof.
     \end{itemize}
      \end{itemize}
 
\item
$N'=\left(\begin{array}{cc}
0 & u\\
0 & 0
\end{array}\right) + X'$, for some $u \in U(R)$ and some $X' \in M_2(J(R))$. 
We consider the following cases:
\begin{itemize}
    \item
    $N'' \in M_2(J(R))$ or $N''=\left(\begin{array}{cc}
0 & 0\\
v & 0
\end{array}\right) + X''$ for some $v \in U(R)$ and some  $X'' \in M_2(J(R))$. The statement in this case follows by cases (1) and (2) of this proof.
 
    \item
    $N''=\left(\begin{array}{cc}
0 & v\\
0 & 0
\end{array}\right) + X''$, for some $v \in U(R)$ and some $X'' \in M_2(J(R))$. Thus, we have $N'N'' \in M_2(J(R))$, so the statement again follows by case (1) of this proof.

\item 
Finally, we have to consider the case $N''=\left(\begin{array}{cc}
v & -w\\
w^{-1}v^2 & -v
\end{array}\right) + X''$, for some $v,w \in U(R)$ and some $X''\in M_2(J(R))$. As we have already seen, if $k < l$ or if $vu_b+wu_a \notin J(R)$, then the statement follows from the first part of case (3) of this proof. So, we can assume that $k=l$ and that $vu_b+wu_a \in J(R)$. 
Then by conjugating equation (\ref{produkt}) with $T_{wv^{-1}}$, we get by Lemma \ref{conjug} that the product of our $2n-1$ nilpotents equals $(M(a,b+awv^{-1})+T'')N_1'N_1''$  for some $T''=\left(\begin{array}{cc}
t_1 & t_2\\
t_3 & t_4
\end{array}\right) \in M_2(J(R)^{n-1})$, $N_1'=\left(\begin{array}{cc}
0 & u\\
0 & 0
\end{array}\right)+X_1'$ and $N_1''=\left(\begin{array}{cc}
0 & 0\\
w^{-1}v^2 & 0
\end{array}\right) + X_1''$, for some $X_1'=\left(\begin{array}{cc}
x_1 & x_2\\
x_3 & x_4
\end{array}\right),X_1''=\left(\begin{array}{cc}
y_1 & y_2\\
y_3 & y_4
\end{array}\right)\in M_2(J(R))$.
The fact that $vu_b+wu_a \in J(R)$ implies that $b'=b+awv^{-1} \in J(R)^{k'} \setminus J(R)^{k'+1}$ for some $k' > k$.
Further calculations yield $(M(a,b')+T'')N_1'N_1''=\left(\begin{array}{cc}
m_1 & m_2
\\
m_3
&
0
\end{array}\right)$, where
$m_1=(a x_1 + b' x_3) y_1 + (a u + a x_2 + b' x_4) w^{-1}v^2 + t_1 u w^{-1}v^2 + (a u + a x_2 + b' x_4) y_3$, $m_2=(a x_1 + b' x_3) y_2 + (a u + a x_2 + b' x_4) y_4$ and $m_3=t_3 u w^{-1}v^2$. Observe that $m_1 \in J(R)^k \setminus J(R)^{k+1}$ and $m_2 \in J(R)^{k''} \setminus J(R)^{k''+1}$ for some $k'' > k$. Obviously, we also have $m_3 \in J(R)^{n-1}$. Now, the statement follows by Lemma \ref{technical}(1).
\end{itemize}
\end{enumerate}
So, we have proved that in every case, we have $N_1N_2 \ldots N_{2n-1} \in \OO_{M(a,b)}$ for some $a, b \in R$. By Lemma \ref{mab}, there exist $c,d \in R$ such that $N_1N_2 \ldots N_s \in \OO_{M(c,d)}$ and thus this theorem is finally proven.
\end{proof}

\begin{Example}
\label{counter}
      Note that Theorem \ref{product} does not hold if we take the product of fewer than $2n-1$ nilpotent matrices. Let $R$ be a finite local principal ring of cardinality $q^{n}$, where $q=p^r$ for some odd prime number $p$ and integer $r$ with $R/J(R) \simeq GF(q)$.  Denote $J(R)=(x)$ and suppose $n=2^k+1$ for some integer $k$. Since $2 \in U(R)$, there exists the element $(n-1)^{-1} \in R$. Observe that $\left(\left(\begin{array}{cc}
x & 1\\
(n-1)x & 0
\end{array}\right)\left(\begin{array}{cc}
0 & (n-1)^{-1}\\
x & 0
\end{array}\right)\right)^{n-1}=\left(\begin{array}{cc}
x & (n-1)^{-1}x\\
0 & x
\end{array}\right)^{n-1}=\left(\begin{array}{cc}
x^{n-1} & x^{n-1}\\
0 & x^{n-1}
\end{array}\right)$. So, $\left(\begin{array}{cc}
x^{n-1} & x^{n-1}\\
0 & x^{n-1}
\end{array}\right)$ is a product of $2n-2$ nilpotent matrices in $M_2(R)$. If $\left(\begin{array}{cc}
x^{n-1} & x^{n-1}\\
0 & x^{n-1}
\end{array}\right)$ is in $\OO_{M(a,b)}$ for some $a,b \in R$, then there exists an invertible matrix $P=\left(\begin{array}{cc}
\alpha & \beta\\
\gamma & \delta
\end{array}\right) \in M_2(R)$ such that $M(a,b)P=P\left(\begin{array}{cc}
x^{n-1} & x^{n-1}\\
0 & x^{n-1}
\end{array}\right)$. This gives us, among other equations, that $x^{n-1}\gamma=x^{n-1}(\gamma+\delta)=0$ and so $x^{n-1}\delta=0$. This implies that $\gamma, \delta \in J(R)$, but this is a contradiction with the assumption that $P$ is an invertible matrix. So, we have proved that 
$\left(\begin{array}{cc}
x^{n-1} & x^{n-1}\\
0 & x^{n-1}
\end{array}\right) \notin \OO_{M(a,b)}$ for any $a, b \in R$.
\end{Example}

\begin{Corollary}
\label{charact}
      Let $n \geq 2$ and let $R$ be a finite commutative local principal ring of cardinality $q^{n}$, where $q=p^r$ for some prime number $p$ and integer $r$ with $R/J(R) \simeq GF(q)$. Then $A \in M_2(R)$ is a product of $s \geq 2n-1$ nilpotent matrices if and only if there exist $a, b \in R$ such that $A \in \OO_{M(a,b)}$.
\end{Corollary}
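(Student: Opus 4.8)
The forward implication is precisely Theorem~\ref{product}: any product of $s\ge 2n-1$ nilpotent matrices lies in $\OO_{M(a,b)}$ for suitable $a,b\in R$. Thus the entire content of the corollary is the converse, namely that every matrix in some orbit $\OO_{M(a,b)}$ can be written as a product of at least $2n-1$ nilpotents. The first step is to reduce this to $M(a,b)$ itself. Since a conjugate of a nilpotent matrix is again nilpotent, if $M(a,b)=N_1\cdots N_s$ with each $N_i$ nilpotent and $A=P^{-1}M(a,b)P$ for some $P\in GL_2(R)$, then $A=(P^{-1}N_1P)\cdots(P^{-1}N_sP)$ exhibits $A$ as a product of $s$ nilpotents. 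Hence it suffices to factor each $M(a,b)$.

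I would obtain a short factorization by splitting on whether $a$ is a unit, using that $R$ is local so that $R=U(R)\sqcup J(R)$. Recall that a $2\times2$ matrix over $R$ whose trace and determinant both lie in $J(R)$ is nilpotent, since by the Cayley--Hamilton relation its square lies in $M_2(J(R))$, which is nil. If $a\in J(R)$, then $\Tr M(a,b)=a\in J(R)$ and $\det M(a,b)=0$, so $M(a,b)$ is itself nilpotent. If $a\in U(R)$, then
\[
M(a,b)=\begin{pmatrix}0&1\\0&0\end{pmatrix}\begin{pmatrix}-b & -a^{-1}b^{2}\\ a & b\end{pmatrix},
\]
and the second factor has trace $0$ and determinant $0$, hence is nilpotent; this writes $M(a,b)$ as a product of two nilpotents. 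In either case $M(a,b)$ is a product of $m\in\{1,2\}$ nilpotents.

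Finally, to reach length at least $2n-1$, I would pad using the idempotent $E=\begin{pmatrix}1&0\\0&0\end{pmatrix}=\begin{pmatrix}0&1\\0&0\end{pmatrix}\begin{pmatrix}0&0\\1&0\end{pmatrix}$, which is a product of two nilpotents and satisfies $E\,M(a,b)=M(a,b)$. Prepending $t$ copies of this factorization expresses $M(a,b)=E^{t}M(a,b)$ as a product of $2t+m$ nilpotents, and any $t$ with $2t+m\ge 2n-1$ completes the converse. Once Theorem~\ref{product} is available there is no serious obstacle remaining; the only points requiring care are the conjugation reduction and the observation that padding by $E$ inflates the number of factors past the threshold. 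This also explains why the statement must be read as ``a product of some $s\ge 2n-1$ nilpotents'' rather than of a prescribed length: the attainable lengths obey a parity constraint (for instance $M(a,b)$ with $a\in J(R)$ and $b\in U(R)$ is nilpotent yet is not a product of two nilpotents, as one sees by reducing modulo $J(R)$), and this existential threshold $2n-1$ is exactly the sharp one exhibited in Example~\ref{counter}.
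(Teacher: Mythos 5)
Your forward direction, the conjugation reduction, and the two short factorizations (the nilpotency of $M(a,b)$ for $a\in J(R)$, and $M(a,b)=\left(\begin{smallmatrix}0&1\\0&0\end{smallmatrix}\right)\left(\begin{smallmatrix}-b&-a^{-1}b^2\\a&b\end{smallmatrix}\right)$ for $a\in U(R)$) are all correct. The gap is in the last step. The corollary, as the paper uses it, must hold for each \emph{fixed} $s\ge 2n-1$: the proof of Theorem \ref{main} fixes $s$, lets $Z$ be the set of products of exactly $s$ nilpotents, and concludes $|Z|=|\{A: A\in\OO_{M(a,b)}\text{ for some }a,b\}|$, which requires that every matrix in such an orbit be a product of exactly $s$ nilpotents for that particular $s$. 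Your padding by $E=\left(\begin{smallmatrix}0&1\\0&0\end{smallmatrix}\right)\left(\begin{smallmatrix}0&0\\1&0\end{smallmatrix}\right)$ only changes the length in steps of $2$, so from a base length $m\in\{1,2\}$ you reach only lengths of one parity; e.g.\ for $n=2$, $s=3$ you never exhibit $M(a,b)$ with $a\in U(R)$ as a product of three nilpotents, and for $s=4$ you never exhibit the $a\in J(R)$ case as a product of four.

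Moreover, your closing claim that the attainable lengths ``obey a parity constraint,'' so that only the existential reading can be true, is itself incorrect. Your example only shows that length exactly $2$ can fail (correctly, via Lemma \ref{trace0}); it does not show that length $4$ fails, and in fact every length $\ge 3$ is attainable. The paper's proof repairs exactly the parity issue by observing that the idempotent can also be written with \emph{three} nilpotent factors,
\[
\left(\begin{array}{cc}0&1\\0&0\end{array}\right)\left(\begin{array}{cc}0&0\\1&0\end{array}\right)
=\left(\begin{array}{cc}0&1\\0&0\end{array}\right)\left(\begin{array}{cc}-1&1\\-1&1\end{array}\right)\left(\begin{array}{cc}0&0\\1&0\end{array}\right),
\]
since $\left(\begin{smallmatrix}-1&1\\-1&1\end{smallmatrix}\right)$ has trace and determinant zero; padding can then shift the length by $2$ or $3$, hence realize every length $\ge m+2$. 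Together with an explicit three-factor decomposition in the unit case, e.g.\ $M(a,b)=\left(\begin{smallmatrix}0&1\\0&0\end{smallmatrix}\right)\left(\begin{smallmatrix}0&0\\a&0\end{smallmatrix}\right)\left(\begin{smallmatrix}1&a^{-1}b\\-b^{-1}a&-1\end{smallmatrix}\right)$ for $a,b\in U(R)$, this yields every length $\ge 3\,(\le 2n-1)$ in all cases, which is what the corollary needs. So your argument proves only the weaker ``some $s\ge 2n-1$'' statement and would not support Theorem \ref{main} as stated.
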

\begin{proof}
   One side of the implication follows from Theorem \ref{product}. 
   
   So, choose $s \geq 2n-1$ and $A\in \OO_{M(a,b)}$. We shall prove that $A$ can be written as a product of $s$ nilpotents. It clearly suffices to show that $M(a,b)$ can be written as a product of $s$ nilpotents for any $a, b \in R$. We consider three cases.
   \begin{enumerate}
       \item 
       $a \in J(R)$: Then $M(a,b)=\left(\left(\begin{array}{cc}
0 & 1\\
0 & 0
\end{array}\right)\left(\begin{array}{cc}
0 & 0\\
1 & 0
\end{array}\right)\right)^t\left(\begin{array}{cc}
a & b\\
0 & 0
\end{array}\right)$ for any $t \geq 1$, so $M(a,b)$ can be written as a product of $2t+1$ nilpotents for any $t \geq 1$. But since $\left(\begin{array}{cc}
0 & 1\\
0 & 0
\end{array}\right)\left(\begin{array}{cc}
0 & 0\\
1 & 0
\end{array}\right)=\left(\begin{array}{cc}
0 & 1\\
0 & 0
\end{array}\right)\left(\begin{array}{cc}
-1 & 1\\
-1 & 1
\end{array}\right)\left(\begin{array}{cc}
0 & 0\\
1 & 0
\end{array}\right)$, we can see that $M(a,b)$ can also be written as a product of $2l$ of nilpotents for 
any $l \geq 2$.

\item 
$b \in J(R)$: By case (1), we know that $\left(\begin{array}{cc}
0 & 1\\
0 & 0
\end{array}\right)$ can be written as a product of $t \geq 3$ nilpotents for any $t \geq 3$. Since $M(a,b)=\left(\begin{array}{cc}
0 & 1\\
0 & 0
\end{array}\right)\left(\begin{array}{cc}
-1 & 1\\
-1 & 1
\end{array}\right)\left(\begin{array}{cc}
0 & 0\\
a & b
\end{array}\right)$ and 
$M(a,b)=\left(\begin{array}{cc}
0 & 1\\
0 & 0
\end{array}\right)\left(\begin{array}{cc}
0 & 0\\
1 & 0
\end{array}\right)\left(\begin{array}{cc}
-1 & 1\\
-1 & 1
\end{array}\right)\left(\begin{array}{cc}
0 & 0\\
a & b
\end{array}\right)$, we conclude that $M(a,b)$ can be written as a product of $t$ nilpotents for any $t \geq 3$ in this case as well.

\item 
$a, b \in U(R)$: Observe that $M(a,b)=\left(\begin{array}{cc}
0 & 1\\
0 & 0
\end{array}\right)\left(\begin{array}{cc}
0 & 0\\
a & 0
\end{array}\right)\left(\begin{array}{cc}
1 & a^{-1}b\\
-b^{-1}a & -1
\end{array}\right)$ and
$M(a,b)=\left(\begin{array}{cc}
0 & 1\\
0 & 0
\end{array}\right)\left(\begin{array}{cc}
0 & 0\\
-b & 0
\end{array}\right)\left(\begin{array}{cc}
0 & 1\\
0 & 0
\end{array}\right)\left(\begin{array}{cc}
1 & a^{-1}b\\
-b^{-1}a & -1
\end{array}\right)$, so the result follows.
   \end{enumerate}
\end{proof}

Let us state our final lemma.

\begin{Lemma}
\label{trace0}
    Let $F$ be a field. If $0 \neq A \in M_2(F)$ is a product of $2$ nilpotents, then $\Tr(A) \neq 0$.
\end{Lemma}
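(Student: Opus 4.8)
The plan is to exploit the fact that a nonzero nilpotent matrix in $M_2(F)$ has rank exactly $1$, so it can be written as an outer product, and then to read off the trace of the product directly from this factorization. First I would observe that if either nilpotent factor is zero then $A=0$, which is excluded; so write $A=N_1N_2$ with $N_1,N_2$ nonzero nilpotents. Since $N_i^2=0$ and $N_i\neq 0$, each $N_i$ has rank $1$ and $\Tr(N_i)=0$, so (exactly as in the rank-one factorization appearing in the proof of Lemma \ref{matnil}) there are nonzero column vectors $u,v,w,z\in F^2$ with $N_1=uv^T$, $N_2=wz^T$, and $v^Tu=z^Tw=0$.

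Next I would compute the product directly from these factorizations. Since $v^Tw\in F$ is a scalar, we get
\[
A=N_1N_2=u\,(v^Tw)\,z^T=(v^Tw)\,uz^T,\qquad \Tr(A)=(v^Tw)\,\Tr(uz^T)=(v^Tw)(z^Tu).
\]
In particular, $A\neq 0$ forces the scalar $v^Tw$ to be nonzero (we already have $u,z\neq 0$). Thus proving $\Tr(A)\neq 0$ reduces to proving that $z^Tu\neq 0$.

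Finally I would argue by contradiction: suppose $z^Tu=0$. Then $z$ and $v$ both lie in the kernel of the linear functional $F^2\to F,\ y\mapsto y^Tu$; since $u\neq 0$ this functional is nonzero, so its kernel is one-dimensional and $z,v$ must be proportional, say $z=\lambda v$ with $\lambda\in F$. As $z\neq 0$ we have $\lambda\neq 0$, and then $0=z^Tw=\lambda\,(v^Tw)$ gives $v^Tw=0$, contradicting $v^Tw\neq 0$. Hence $z^Tu\neq 0$, and therefore $\Tr(A)\neq 0$.

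The only real subtlety is the dimension count in the last step: everything hinges on the orthogonal complement (with respect to the standard pairing) of a nonzero vector in $F^2$ being one-dimensional, which forces the two conditions $v^Tu=0$ and $z^Tu=0$ to collapse $v$ and $z$ onto a single line; combining this with the nilpotency condition $z^Tw=0$ is what produces the contradiction. I expect this collapse to be the main (and essentially only) obstacle, the remainder being routine bookkeeping of the rank-one factorizations.
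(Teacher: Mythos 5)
Your proof is correct and follows essentially the same route as the paper's: factor each nonzero nilpotent as a rank-one outer product annihilated by its own pairing, read off $\Tr(A)$ as a product of two scalars, and use the one-dimensionality of the orthogonal complement of a nonzero vector in $F^2$ to force the contradiction. The only (immaterial) difference is that you collapse the two row vectors onto a line using the common annihilator $u$, whereas the paper collapses the two column vectors using $v_2$; the three orthogonality relations and the resulting contradiction are identical.
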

\begin{proof}
    Suppose $0 \neq A=N_1N_2$ is a product of two (nonzero) nilpotents. Then $N_1=u_1v_1^T$ and $N_1=u_2v_2^T$ for some $u_1,v_1,u_2,v_2 \in F^2$. Therefore $A=(v_1^Tu_2)u_1v_2^T$ and $\Tr(A)=(v_1^Tu_2)(v_2^Tu_1)$. Suppose now that $\Tr(A)=0$. Since $A \neq 0$, we have $v_1^Tu_2 \neq 0$, so we must have $v_2^Tu_1=0$. The fact that $N_2$ is nilpotent yields $v_2^Tu_2 =0$. 
    Because $v_2^Tu_1=v_2^Tu_2=0$ for some nonzero vector $v_2 \in F^2$, we have $u_1= \lambda u_2$ for some $0 \neq \lambda \in F$. However, $N_1$ is nilpotent, so $v_1^Tu_1=0$ and therefore also $v_1^Tu_2 = 0$, a contradiction.
\end{proof}

We can now state the main theorem.

\begin{Theorem}
\label{main}
    Let $R$ be a finite commutative local principal ring of cardinality $q^{n}$, where $q=p^r$ for some odd prime number $p$ and integer $r$ with $R/J(R) \simeq GF(q)$. 
    Choose $s \geq 2n-1$. Then the number of elements in $H(R)$ that can be written as a product of $s$ nilpotent elements is equal to
    \begin{equation*}
    \begin{cases} 
    q^2, \text { if } n=1 \text { and } s=1, \\
    q^3-q+1, \text { if } n=1 \text { and } s=2, \\
    q^{2n}-q^{n+1}+\frac{(q+2)q^{3n+1}}{q^2+q+1}+\frac{q^{3}+q^2+1}{q^2+q+1}-1, \text { otherwise}.
    \end{cases}
    \end{equation*}
\end{Theorem}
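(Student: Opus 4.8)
The plan is to move the whole count to $M_2(R)$ via Theorem \ref{izo}, since $H(R)\simeq M_2(R)$, and to treat the three regimes separately. For $n=1$ the ring $R=GF(q)$ is a field. When $s=1$ the elements in question are exactly the nilpotents, so Lemma \ref{noofnil} gives $q^{2(2n-1)}=q^2$. When $s=2$ I would combine Lemma \ref{trace0} with its converse: a nonzero product of two nilpotents is singular of nonzero trace, and conversely every rank-one matrix of nonzero trace factors as a product of two nilpotents (write it as $u w^{T}$ with $w^{T}u\neq 0$ and take factors $u v^{T}$ and $x w^{T}$ with $v^{T}u=w^{T}x=0$; then $v^{T}x\neq 0$ because $w^{T}u\neq 0$, and one rescales to recover $u w^{T}$). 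Since there are $(q^2-1)(q+1)$ rank-one matrices over $GF(q)$, of which the $q^2-1$ nilpotent ones have trace zero, the products of two nilpotents number $1+(q^2-1)(q+1)-(q^2-1)=q^3-q+1$.

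For the remaining (``otherwise'') case I would reduce to a single orbit count. By Corollary \ref{charact} (for $n\ge 2$, $s\ge 2n-1$), and by the $n=1$ analysis inside the proof of Theorem \ref{product} together with the explicit factorisations in the proof of Corollary \ref{charact} (for $n=1$, $s\ge 3$), the set of matrices expressible as a product of $s$ nilpotents equals $\bigcup_{a,b\in R}\OO_{M(a,b)}$, independently of $s$ in this range. So it suffices to compute $\big|\bigcup_{a,b}\OO_{M(a,b)}\big|$.

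The structural heart is to describe this union intrinsically. The image of $M(a,b)$ is the cyclic module $(Ra+Rb)\times\{0\}$, so conjugation shows that $A\in\OO_{M(a,b)}$ forces the column space $\im A$ to lie inside a free rank-one direct summand $Rp_1$; conversely, if $\im A\subseteq Rp_1$ with $p_1$ unimodular, then extending $p_1$ to a basis conjugates $A$ to some $M(c,d)$. Hence $A$ lies in the union if and only if $\im A$ is a cyclic $R$-module, equivalently (with $u,v$ the two columns of $A$) if and only if $u\in Rv$ or $v\in Ru$. I would then count the matrices with cyclic image by grouping them by their image module $C$: using Lemma \ref{pid}, for each $1\le m\le n$ there are exactly $q^{m-1}(q+1)$ cyclic submodules $C\cong R/J(R)^{m}$, and the number of $A$ with $\im A=C$ is the number of surjections $R^2\to C$, namely $q^{2m}-q^{2m-2}$. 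Adding the zero image, the total is $1+\sum_{m=1}^{n}q^{m-1}(q+1)(q^{2m}-q^{2m-2})$; using $\sum_{m=1}^{n}q^{3(m-1)}=(q^{3n}-1)/(q^3-1)$ and $q^3-1=(q-1)(q^2+q+1)$, this collapses to a closed form in $q$ and $n$, which I would then reconcile with the expression in the statement.

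The step I expect to be the main obstacle is exactly this enumeration. First one must prove the ``cyclic image'' characterisation cleanly over a chain ring --- in particular that a cyclic image always sits inside a free rank-one summand even when its generator lies deep in $J(R)$, where the containing summand is no longer unique. Second, one must carry out the submodule-and-surjection bookkeeping so that the contributions of the three possibilities for the nilpotent generator (a unit, a nonzero element of $J(R)$, or zero) assemble correctly into a single formula. As an independent verification of the total I would redo the count by inclusion--exclusion on the condition ``$u\in Rv$ or $v\in Ru$'', evaluating $\sum_u|Ru|$ and subtracting $\sum_u\#\{v:Rv=Ru\}$; agreement of the two computations would confirm the closed form.
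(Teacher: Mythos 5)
Your architecture is the same as the paper's: transfer everything to $M_2(R)$ via Theorem \ref{izo}, settle $(n,s)=(1,1)$ with Lemma \ref{noofnil}, identify the products of $s$ nilpotents with $\bigcup_{a,b}\OO_{M(a,b)}$ in the remaining cases, and count that union. The difference is that where the paper imports the counts from \cite[Lemmas 3.3, 3.5 and Theorem 3.8]{dolzanid}, you rederive them: for $n=1$, $s=2$ by counting rank-one matrices of nonzero trace (your converse to Lemma \ref{trace0} is correct, and $1+(q^2-1)(q+1)-(q^2-1)=q^3-q+1$ checks out), and in general by the cyclic-image stratification. That characterisation is correct over a chain ring: $A\in\OO_{M(a,b)}$ for some $a,b$ iff $\im A$ lies in a free rank-one summand, and every cyclic submodule $Rv$ with $v=x^kw$, $w$ unimodular, lies in $Rw$, so your worry about non-uniqueness of the containing summand is harmless. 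Your bookkeeping is also right: there are $q^{m-1}(q+1)$ cyclic submodules of $R^2$ isomorphic to $R/J(R)^m$ and $q^{2m}-q^{2m-2}$ matrices with image equal to a fixed one.

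The genuine problem is the step you defer to the end, ``reconcile with the expression in the statement'': it does not reconcile. Your count collapses to $1+(q+1)^2(q-1)\frac{q^{3n}-1}{q^3-1}=\frac{(q+1)^2q^{3n}-q}{q^2+q+1}$, which agrees with the displayed formula for $n=1$ (both give $q^3+q^2-q$) and for $n=2$ (both give $897$ when $q=3$, matching Example \ref{exz9}), but not for $n\ge 3$: for $q=3$, $n=3$ your formula gives $24225$ while the theorem's gives $23361$. The difference, $864=12\cdot 72$, is exactly the stratum of matrices whose image is cyclic of order $q^2$ (there are $q(q+1)=12$ such submodules of $(\ZZ_{27})^2$ and $q^4-q^2=72$ surjections onto each), and such matrices, e.g.\ $M(3,0)$ over $\ZZ_{27}$, visibly lie in the union. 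So either your stratification hides an error I cannot locate, or the closed form quoted from \cite[Theorem 3.8]{dolzanid} does not equal $\bigl|\bigcup_{a,b}\OO_{M(a,b)}\bigr|$ for $n\ge 3$. You must resolve this explicitly --- e.g.\ by the inclusion--exclusion cross-check you propose, or by a brute-force count over $\ZZ_{27}$ --- before the argument can stand; as written, your proposal proves a formula that differs from the one in the statement for all $n\ge 3$.
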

\begin{proof}
Denote $Z=\{x \in H(R); x$ can be written as a product of $ s$ nilpotents$\}$. Theorem \ref{izo} shows that $|Z|=|\{A \in M_2(R); A$ can be written as a product of $ s$ nilpotents$\}|$. 
If $n=1$ and $s=1$, then the statement follows from Lemma \ref{noofnil}.
If $n=1$ and $s=2$, then Theorem \ref{product} shows that if $A$ can be written as a product of $ s$ nilpotents, then $A \in \OO_{M(a,b)}$ for some $a, b \in R$. Now, if $a \neq 0$, then $M(a,b)=\left(\begin{array}{cc}
0 & 1\\
0 & 0
\end{array}\right)\left(\begin{array}{cc}
-b & -a^{-1}b^2\\
a & b
\end{array}\right)$ is a product of two nilpotents. However, if $a=0$ and $b \neq 0$, then $M(a,b)$ is not a product of two nilpotents by Lemma \ref{trace0}. Lemma 3.3 from \cite{dolzanid} shows that for $a \neq 0$ we have $O_{M(a,b)}=O_{M(a',b')}$ for some $a',b' \in R$ if and only if $a'=a$.
Obviously $M(0,0)$ can also be written as a product of two nilpotents, so $|Z|=\sum\limits_{a \neq 0}|O_{M(a,0)}|+1.$ From \cite[Lemma 3.5]{dolzanid}, we have
$|O_{M(a,0)}|=q(q+1)$ for any $a \neq 0$, so $|Z|=(q-1)q(q+1)+1=q^3-q+1$.

From here onwards, we can therefore assume that $s \geq 3$.
It follows from Corollary \ref{charact} that $|Z|=|\{A \in M_2(R); $ there exist $a,b \in R$ such that $A \in M(a,b)\}|$.
Finally, \cite[Lemma 3.3 and Theorem 3.8]{dolzanid} give us $$|Z|=q^{2n}-q^{n+1}+\frac{(q+2)q^{3n+1}}{q^2+q+1}+\frac{q^{3}+q^2+1}{q^2+q+1}-1.$$
\end{proof}

\begin{Example}
\label{exz9}
   Examine the ring $R=\ZZ_9$. Lemma \ref{noofnil} tells us that there are $729$ nilpotents in $H(R)$. By Theorem \ref{main}, there exist exactly $897$ out of $2673$ noninvertible elements in $H(R)$ that can be written as a product of $3$ or more nilpotents. 
\end{Example}

\bigskip

{\bf Statements and Declarations} \\

The author states that there are no competing interests. 

\bigskip

\bibliographystyle{amsplain}
\bibliography{biblio}

\bigskip

\end{document}